\newtheorem{theorem}{Theorem}
\newtheorem{prop}[theorem]{Proposition}
\newtheorem{corollary}[theorem]{Corollary} 
\newtheorem{lemma}[theorem]{Lemma}
\def \n{\noindent }
\def \bs{\bigskip}
\def \R{\mathbb R}
\def \Q{\mathbb Q}
\def \Zc{\mathcal Z}
\def \Z{\mathbb Z}
\def \I{\mathcal I}
\def \F{\mathcal F}
\def \H{\mathcal H}
\def \B{\mathcal B}
\def \S{\mathcal S}
\def \bo{{\bf 1}}
\title{Level Matrices}
\author{G.~Seelinger}
\author{P.~Sissokho}
\author{L.~Spence}
\author{C.~Vanden~Eynden}
\address{Mathematics Department, Illinois State University, Campus Box
   4520, Stevenson Hall 313, Normal, IL 61790-4520}
\email{\{gfseeli|psissok|spence|cve\}@ilstu.edu}
\begin{document}
\begin{abstract}
Let $n>1$ and $k>0$ be fixed integers. A matrix is said to be {\em level} if all its column sums are equal. 
A level matrix with $m$ rows is called {\em reducible} if we can delete $j$ rows, $0<j<m$, so that 
the remaining matrix is level. 
We ask if there is a minimum integer $\ell=\ell(n,k)$ such that for all $m>\ell$,  
any $m\times n$ level matrix with entries in $\{0,\ldots,k\}$ is reducible.
It is known that $\ell(2,k)=2k-1$. In this paper, we establish the existence 
of $\ell(n,k)$ for $n\geq 3$ by giving upper and lower bounds for it. We then apply this 
result to bound the number of certain types of vector space multipartitions.
\end{abstract}
\maketitle
\section{Introduction}
Let $n>1$ and $k>0$ be integers. We define a $k$-{\em matrix} to be a matrix whose entries are in 
$\{0, 1, 2, \ldots, k\}$. 
A matrix is said to be {\em level} if all its column sums are equal. 
A level matrix with $m$ rows is called {\em reducible} if we can delete $j$ rows, $0<j<m$, so that 
the remaining matrix is level; otherwise it is {\em irreducible}. Note that if $M$ is an irreducible
matrix, then any matrix obtained from it by a permutation of rows or columns is also irreducible.  

For $k=1$ and any integer $n>1$, the $n\times n$ identity matrix is irreducible.
If $n>4$, then we can construct an irreducible $1$-matrix with $n$ columns and $m>n$ 
distinct rows. Moreover, for any integers $k>1$ and $n>1$, we can construct an irreducible $k$-matrix 
with $n$ columns and $m>n$ distinct rows. 

In general, we do not require that irreducible $k$-matrices have distinct rows.
We are interested in the following question.

\vspace{0.2cm}
\n{\bf Question}.
{\em Given integers $n>1$ and $k>0$, is there a minimum integer $\ell=\ell(n,k)$ such that for all $m>\ell$,  
any $m\times n$ level $k$-matrix is reducible?
If $\ell(m,k)$ exists, then what can we say about its value?
}

\vspace{0.2cm}
 The exact value of $\ell(2,k)$ (see Theorem~\ref{thm:La}) follows from earlier work by Lambert~\cite{Lam}. 
Perhaps due to a wide range of notation and terminology in related areas, Lambert's result has been 
(independently) rediscovered by Diaconis et al.~\cite{DGS}, and Sahs et al.~\cite{SST}.
In addition, M. Henk and R. Weismantel~\cite{HW2} gave improvements of Lambert's result.
\begin{theorem}[Lambert~\cite{Lam}]\label{thm:La}
If $k>1$, then $\ell(2,k)=2k-1$. Moreover, there are (up to row/column permutations) only two 
irreducible $k$-matrices with $2k-1$ rows and $2$ columns.
\end{theorem}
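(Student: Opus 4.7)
The plan is to reduce the question to a statement about integer zero-sum sequences, establish the two bounds $\ell(2,k)\le 2k-1$ and $\ell(2,k)\ge 2k-1$ separately, and then extract the classification from the tightness of the upper-bound argument.

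The reduction sends an $m\times 2$ level $k$-matrix with rows $(a_i,b_i)$ to the integer sequence $c_i:=a_i-b_i\in\{-k,\ldots,k\}$. Levelness becomes $\sum_i c_i=0$, and the sub-matrix on rows indexed by $S$ is level iff $\sum_{i\in S}c_i=0$; hence the matrix is reducible iff some proper nonempty $S\subsetneq\{1,\ldots,m\}$ has $\sum_{i\in S}c_i=0$. Any row with $a_i=b_i$ is a reducing singleton, so one may assume each $c_i\ne 0$, and the task is to show that any zero-sum sequence in $\{-k,\ldots,k\}\setminus\{0\}$ of length $m\ge 2k$ contains a proper nonempty zero-sum subsequence.

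For the upper bound I would split the indices into $P=\{i:c_i>0\}$ and $N=\{i:c_i<0\}$, set $T=\sum_{i\in P}c_i$, and consider the subset-sum sets
\[
A=\Bigl\{\sum_{i\in S}c_i : S\subseteq P\Bigr\},\qquad B=\Bigl\{-\sum_{i\in S}c_i : S\subseteq N\Bigr\},
\]
both contained in $\{0,1,\ldots,T\}$ and containing $\{0,T\}$. A common value $v\in (A\cap B)\setminus\{0,T\}$ produces a proper nonempty zero-sum subsequence (pair a positive subset summing to $v$ with a negative subset summing to $-v$), so irreducibility forces $A\cap B=\{0,T\}$ and hence $|A|+|B|\le T+3$. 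Monotone partial sums of the positives and negatives yield $|A|\ge |P|+1$ and $|B|\ge |N|+1$, whence $m\le T+1$. The main obstacle is to sharpen this to $m\le 2k-1$; the weak bound $T\le k\min(|P|,|N|)$ is not enough, and one must exploit the entry bound $|c_i|\le k$ more carefully. A workable refinement is to peel off the entries of maximum magnitude, argue that their presence already forces extra elements into $A\cap B$ unless they occur with very specific multiplicities, and then iterate on the remaining entries (effectively an induction on $k$).

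For the lower bound I would exhibit the explicit irreducible $(2k-1)\times 2$ level $k$-matrix with $k-1$ rows equal to $(k,0)$ and $k$ rows equal to $(0,k-1)$. Both column sums equal $k(k-1)$, and the associated $c$-sequence has $k-1$ copies of $k$ and $k$ copies of $-(k-1)$. A subset taking $i$ of the first kind and $j$ of the second has sum $ik-j(k-1)$, which under $0\le i\le k-1$ and $0\le j\le k$ vanishes only at $(i,j)=(0,0)$ and $(i,j)=(k-1,k)$ since $\gcd(k,k-1)=1$. Hence the matrix is irreducible, giving $\ell(2,k)\ge 2k-1$; its column swap is the second extremal matrix. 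For the uniqueness claim I would revisit the equality cases at $m=2k-1$ in the upper-bound proof: tightness forces the multiset $\{|c_i|\}$ to consist of the coprime pair $k$ and $k-1$ in the unique ratio compatible with the zero-sum condition, and for each resulting $c$-value the corresponding row $(a_i,b_i)$ is then determined up to the allowed row and column permutations, leaving exactly the two matrices.
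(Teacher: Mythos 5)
The paper does not prove this statement itself --- it is quoted from Lambert~\cite{Lam} (with rediscoveries in \cite{DGS} and \cite{SST}) --- so there is no internal proof to compare against; your reduction to zero-sum sequences over $\{-k,\dots,k\}\setminus\{0\}$ is indeed the standard framing, and your lower-bound construction is correct and complete. The problem is that the upper bound, which is the heart of the theorem, is not actually proved. Your subset-sum argument yields only $m\le T+1$ where $T=\sum_{i\in P}c_i$, and $T$ can be on the order of $k\,m/2$, so this inequality is nowhere near $m\le 2k-1$ on its own. You acknowledge this and propose to ``peel off the entries of maximum magnitude \dots and then iterate,'' but this is exactly the step where all the work lies, and no argument is given for why large entries ``force extra elements into $A\cap B$'' or how the iteration terminates with the bound $2k-1$. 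A workable route (the one underlying the cited proofs) is quite different: order the terms greedily so that every partial sum stays in a window of fewer than $2k$ integers (pick a positive term when the running sum is nonpositive, a negative term otherwise), and then apply pigeonhole to the $m$ partial sums $s_0,\dots,s_{m-1}$ to find $s_i=s_j$ and hence a proper nonempty zero-sum block. As written, your proof establishes $\ell(2,k)\ge 2k-1$ but not $\ell(2,k)\le 2k-1$.

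The classification claim also has a concrete defect beyond being only sketched. You name the ``second extremal matrix'' as the column swap of the first, but the theorem counts matrices up to row \emph{and column} permutations, so the column swap is not a new example. Moreover, the passage from an extremal difference-sequence back to a matrix is not as rigid as you assert: a value $c_i=-(k-1)$ is realized by either row $(0,k-1)$ or row $(1,k)$, and every mixture of these choices still produces a level, irreducible $(2k-1)\times 2$ $k$-matrix with the same $c$-sequence (e.g., for $k=2$, the matrices with row multisets $\{(2,0),(0,1),(0,1)\}$, $\{(2,0),(0,1),(1,2)\}$, $\{(2,0),(1,2),(1,2)\}$ are pairwise inequivalent and all irreducible). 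So the uniqueness statement cannot follow merely from pinning down the multiset $\{|c_i|\}$; you would need an additional normalization of the rows (such as requiring a zero entry in each row) and an argument for it, which the sketch does not supply.
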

 However, to the best of our knowledge, the exact value of $\ell(n,k)$ is unknown for $n\geq 3$. 
In this paper, we prove the following theorem.
\begin{theorem}\label{thm:main} Let $n\geq 3$ and $k>0$ be integers, and  let $\epsilon>0$ 
be any real number. There exist infinitely infinitely many values of $n$ for which 
$\ell(n,1)>e^{(1-\epsilon)\sqrt{n\ln n}}$. On the other hand, 
\[ \ell(n,k)\leq 
\begin{cases} (2k)^3 \quad & \mbox{if $n=3$,} \\ 
k^{n-1}2^{-n}(n+1)^{(n+1)/2}\left((k+1)^n-k^n+1\right) 
& \mbox{if $n>3$.}\end{cases} 
\]
\end{theorem}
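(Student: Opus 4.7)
The proof splits into an upper-bound argument valid for all $n\ge 3$ and $k>0$, and a lower-bound construction specific to $k=1$. I take these in turn.

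For the upper bound I plan to reformulate irreducibility as a minimal zero-sum condition in a free abelian group. To each row $r\in\{0,\ldots,k\}^n$ assign
\[
\phi(r)=\bigl(r^{(1)}-r^{(n)},\,r^{(2)}-r^{(n)},\,\ldots,\,r^{(n-1)}-r^{(n)}\bigr)\in\{-k,\ldots,k\}^{n-1};
\]
then a subset of rows yields a level submatrix if and only if its $\phi$-values sum to zero. An $m\times n$ level $k$-matrix is therefore irreducible exactly when $\phi(r_1),\ldots,\phi(r_m)$ is a minimal zero-sum sequence in $\Z^{n-1}$. Using a Steinitz-type reordering lemma, the rows can be permuted so that every prefix sum $S_i=\phi(r_{\pi(1)})+\cdots+\phi(r_{\pi(i)})$ lies in a bounded convex region $R\subset\R^{n-1}$ depending only on $n$ and $k$. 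Pigeonhole on the $m+1$ values $S_0,\ldots,S_m$ then produces a collision $S_i=S_j$ with $(i,j)\ne(0,m)$ as soon as $m$ exceeds the number of integer points of $R$, yielding a proper zero-sum sub-multiset and hence reducibility. The main work lies in choosing the right norm and Steinitz bound so that the lattice-point count matches the stated expression: the factor $(n+1)^{(n+1)/2}$ suggests an $\ell_2$-Steinitz estimate combined with Hadamard's determinantal inequality to bound the volume of the enclosing parallelepiped, and the factor $(k+1)^n-k^n+1$ most likely arises from a preliminary reduction separating off rows that are themselves already level or that share a ``dominant'' coordinate.

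The case $n=3$ admits a sharper, direct planar argument yielding the cleaner bound $(2k)^3$. Here $\phi$ sends rows into the two-dimensional box $\{-k,\ldots,k\}^2$, and minimal zero-sum sequences in $\Z^2$ can be analyzed explicitly, for instance by sorting the vectors by argument and tracking the prefix sums along the boundary of their convex hull. This avoids the dimensional blow-up of the generic Steinitz bound.

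For the lower bound I must exhibit irreducible $0$-$1$ matrices with $e^{(1-\epsilon)\sqrt{n\ln n}}$ rows along an infinite sequence of $n$, which via the $\phi$ reformulation amounts to producing long minimal zero-sum sequences in $\{-1,0,1\}^{n-1}$. I would select $n$ along a sparse subsequence with rich arithmetic structure (for instance $n$ for which Landau's function $g(n)=\max_{\sigma\in S_n}\operatorname{ord}(\sigma)$ is near-maximal, so that $\ln g(n)\sim\sqrt{n\ln n}$), and form rows as indicator vectors assembled from cosets of several cyclic subgroups simultaneously, linked through the Chinese Remainder Theorem. The main obstacle is verifying minimality: one must show that no proper sub-multiset of the chosen rows has equal column sums, which reduces to a CRT cancellation check modulo each coprime cyclic factor. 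Once this is in hand, the known asymptotics of $g(n)$ (or of any similar near-optimal combinatorial parameter) deliver the claimed lower bound along an infinite family of $n$.
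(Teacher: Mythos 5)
Your reduction via $\phi(r)=(r^{(1)}-r^{(n)},\ldots,r^{(n-1)}-r^{(n)})$ is correct --- a sub-multiset of rows is level exactly when its $\phi$-values sum to zero --- but the Steinitz-plus-pigeonhole argument you build on it does not prove the stated bound, and you concede as much when you write that ``the main work lies in choosing the right norm and Steinitz bound so that the lattice-point count matches the stated expression.'' Even if you invoke the standard Steinitz constant $d$ in the $\ell_\infty$ norm, the prefix sums land in $[-(n-1)k,(n-1)k]^{n-1}$ and the pigeonhole gives $m\leq (2(n-1)k+1)^{n-1}$, which is a bound of a genuinely different shape: for $k=1$ the theorem claims $\ell(n,1)\leq (n+1)^{(n+1)/2}\approx n^{n/2}$, whereas your count is $\approx (2n)^{n}$, so your route proves something strictly weaker precisely in the regime the lower bound addresses. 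The factors in the stated bound do not come from a Steinitz estimate at all: the paper writes $H=L(A,r_h,\vec h)$ for a single ``universal'' $k$-matrix $A$ with $m=\frac12((k+1)^n-k^n-1)$ distinct rows, applies Carath\'eodory's theorem to express $\vec h\in\F(A)$ as a convex combination of at most $d+1\leq \frac12((k+1)^n-k^n+1)$ basic feasible solutions (this is where the factor $(k+1)^n-k^n+1$ comes from), shows irreducibility forces each coefficient $\lambda_j r_h/r_j\leq 1$, and bounds the contribution of each BFS via Cramer's rule together with the maximal-determinant bound $(k/2)^n(n+1)^{(n+1)/2}$ for $n\times n$ matrices with entries in $[0,k]$ (this is where $(n+1)^{(n+1)/2}$ comes from --- Hadamard-type, as you guessed, but applied to denominators of BFS, not to a Steinitz region). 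Your $n=3$ ``planar argument'' is likewise only a sketch; the paper instead proves $\ell(n,k)<(2k)^{2^{n-1}-1}$ by induction on $n$, decomposing the first $n$ columns into irreducible blocks and applying Lambert's $\ell(2,k)=2k-1$ to the resulting $t\times 2$ matrix of block sums.

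For the lower bound your instinct (Landau's function, primes, CRT) is morally the same as the paper's, and the asymptotic $e^{\sqrt{n\ln n}}$ is indeed of Landau type, but your construction is not specified concretely enough to check the one thing that matters, namely irreducibility. The paper takes the block-diagonal matrix with blocks $J_{p_i+1}$ (all ones except a zero diagonal) for the first $t$ primes, repeats each row of the $i$th block $P/p_i$ times with $P=\prod p_i$, and irreducibility is immediate: levelness within block $i$ forces equal multiplicities $x_i$ of its rows, levelness across blocks forces $p_1x_1=\cdots=p_tx_t$, and pairwise coprimality forces the minimal solution $x_i=P/p_i$. The prime number theorem and $\sum_{i\leq t}p_i\sim x^2/\ln(x^2)$ then give $\ln m\sim\sqrt{n\ln n}$. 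You should either adopt such an explicit construction or actually carry out the ``CRT cancellation check'' you defer; as written, the minimality verification --- the entire content of the lower bound --- is missing.
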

%
Let  $M_{m,n}(\Z)$ be the set of all $m\times n$ matrices with entries in $\Z$.
In what follows, vectors are assumed to be column vectors (unless otherwise specified), and $\Z^n$
denotes the set of all (column) vectors with $n$ entries from $\Z$.
Let $A\in M_{m,n}(\Z)$ be a $k$-matrix and let $\bo =(1,\ldots,1)^T \in \Z^n$.  We also let 
$A_i$ denote the $i$th row of $A$.

For any $\vec{x} \in \Z^m$, we say $\vec{x}$ is a {\em leveler of $A$} if $\vec{x}$ has nonnegative entries
and  there exists a nonnegative $\alpha \in \Z$ such that $\vec{x}^TA = \alpha \bo^T$, or equivalently, 
$A^T\vec{x} = \alpha \bo$.
In particular, given a leveler $\vec{x} =(x_1, x_2, \ldots,x_m)^T\in \Z^m$ of $A$, we can form
an $(x_1+x_2+\cdots+x_m)\times n$ level $k$-matrix by taking $x_1$ copies of the first row of $A$, 
$x_2$ rows of the second row of $A$, etc.  For the purposes of level $k$-matrices, this process will define 
this matrix up to a permutation of rows.  In this way, each leveler represents a class of level $k$-matrices.

One way to classify level $k$-matrices then is to classify the levelers of the $k$-matrices $A$.  
To assist us in this analysis, we use the base field ${\mathbb{Q}}$ and the following notation.
For any $\vec{x}\in{\mathbb{Q}}^m$, we write $\vec{x}\geq \vec{0}$ if $x_i\geq 0$ for all $1\leq i\leq m$.
If $\vec{x},\vec{y}\in {\mathbb{Q}}^m$, we write $\vec{x}\geq \vec{y}$ if $\vec{x}-\vec{y}\geq \vec{0}$, 
and write $\vec{x}>\vec{y}$ if $\vec{x}\geq \vec{y}$ and $\vec{x}\not=\vec{y}$.
Finally, let $\vec{x}\in \Z^m$ be a leveler for $A$.  We say $\vec{x}$ is an {\em irreducible} leveler if, 
for any leveler $\vec{y}\in \Z^m$, we have 
$\vec{x}\geq \vec{y} \Rightarrow \vec{y}=\vec{x} \mbox{ or } \vec{y} = \vec{0}$. 
Note that $\vec{x}$ is an irreducible leveler of $A$ if and only if the corresponding matrix formed 
from $A$ is an irreducible $k$-matrix.

Assume that the rows of $A\in M_{m,n}(\Z)$ are distinct and $m\geq n$. Define
\begin{equation}\label{FA}
\F(A)=\{\vec{x}\in {\mathbb{Q}}^m \,|\, A^T\vec{x}=\bo \mbox{ and } \vec{x}>\vec{0}\}.
\end{equation}
Note that $\F(A)$ is a convex polytope in ${\mathbb{Q}}^m$ since it is the intersection of the 
linear space $\{\vec{x}\in {\mathbb{Q}}^m \;|\; A^T\vec{x}=\bo\}$ with the half-spaces
$\H_i = \{ \vec{x}\in {\mathbb{Q}}^m \;|\; x_i \geq 0\}$ for $1\leq i\leq m$.

We say that $\vec{x}\in\F(A)$ is a {\em basic feasible solution} (BSF) 
if there exists a set of $n$ indices $I=\{i_1,\ldots,i_n\}\subseteq \{1,2,\ldots,m\}$ such that:
\begin{enumerate}
\item[(a)] $x_i=0$ for each $i\not\in I$.
\item[(b)] If $C$ is the matrix with rows $A_i$ for $i\in I$, then $C$ 
is invertible. Thus, if $\vec{y}=C^{-1}\bo$, then $x_{i_j}=y_j$ for $1\leq j\leq n$.
\end{enumerate}
Note that for any given set of $n$ indices $I$, there is at most one BFS corresponding to it.  
We will use this property later.

Define
\begin{equation}\label{BA}
\B(A)=\{\vec{x}\in \F(A)\,|\, \mbox{$\vec{x}$ is a basic feasible solution in $\F(A)$}\}.
\end{equation}
Let ${\mathcal C}(A) = \{ q\vec{x} :\;  q\geq 0, q\in {\mathbb{Q}}, \vec{x}\in \F(A)\}$ 
be the positive affine cone of $\F(A)$ in ${\mathbb{Q}}^m$. Then ${\mathcal C}(A)$ is a pointed 
rational cone generated by $\B(A)$, and ${\mathcal Z}(A)={\mathcal C}(A)\cap \Z^m$ is exactly 
the set of levelers for $A$. By \cite[Prop.\ 7.15]{MS}, there exists a unique minimal generating 
set of ${\mathcal Z}(A)$, which is called the {\em Hilbert basis} of ${\mathcal Z}(A)$.  
We have the following proposition.
\begin{prop}\label{prop:hil} If $A$ is a matrix with nonnegative 
entries, then the Hilbert basis of ${\mathcal Z}(A)$ is the set of irreducible levelers of $A$. 
\end{prop}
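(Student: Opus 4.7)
The plan is to prove the two directions of the equivalence by relating the standard ``indecomposability'' characterization of the Hilbert basis to the definition of an irreducible leveler. Recall that for a pointed rational cone $\mathcal{C}$, an element of $\mathcal{C}\cap\Z^m$ lies in the Hilbert basis iff it cannot be written as the sum of two nonzero elements of the semigroup $\mathcal{C}\cap\Z^m$. So once we show that the ``componentwise $\le$'' ordering used to define irreducibility matches decomposability in $\mathcal{Z}(A)$, the result follows.

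First I would establish the key observation that makes nonnegativity of $A$ essential: if $\vec{x},\vec{y}\in\mathcal{Z}(A)$ with $A^T\vec{x}=\alpha\bo$, $A^T\vec{y}=\beta\bo$ and $\vec{0}\le \vec{y}\le \vec{x}$, then $\vec{z}:=\vec{x}-\vec{y}$ is also in $\mathcal{Z}(A)$. Indeed $\vec{z}\in\Z^m_{\ge 0}$ componentwise, and $A^T\vec{z}=(\alpha-\beta)\bo$; the scalar $\alpha-\beta$ is nonnegative because $A$ has nonnegative entries forces $\beta = (A^T\vec{y})_j \le (A^T\vec{x})_j=\alpha$ for each column $j$. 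Thus $\vec{z}$ is a bona-fide leveler.

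For the direction that Hilbert basis elements are irreducible, let $\vec{x}$ be in the Hilbert basis and suppose $\vec{y}\in\mathcal{Z}(A)$ satisfies $\vec{0}\le\vec{y}\le\vec{x}$. By the observation, $\vec{z}=\vec{x}-\vec{y}$ is also in $\mathcal{Z}(A)$, so $\vec{x}=\vec{y}+\vec{z}$ is a decomposition in the semigroup. Indecomposability of Hilbert basis elements forces $\vec{y}=\vec{0}$ or $\vec{z}=\vec{0}$, i.e., $\vec{y}=\vec{x}$, which is exactly the definition of an irreducible leveler. Conversely, if $\vec{x}$ is irreducible and $\vec{x}=\vec{y}+\vec{z}$ with $\vec{y},\vec{z}\in\mathcal{Z}(A)$, then $\vec{0}\le\vec{y}\le\vec{x}$, so irreducibility gives $\vec{y}\in\{\vec{0},\vec{x}\}$, hence $\vec{x}$ is indecomposable and therefore lies in the Hilbert basis.

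There is no real obstacle here; the only subtlety is the use of nonnegativity of $A$ in the first paragraph, which is precisely the hypothesis of the proposition and is needed to ensure that the semigroup order induced by $\mathcal{Z}(A)$ agrees with the componentwise order on $\Z^m_{\ge 0}$. The rest is a direct unwinding of definitions together with the standard indecomposability characterization of the Hilbert basis cited from \cite{MS}.
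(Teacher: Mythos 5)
Your argument is correct. Note that the paper states Proposition~\ref{prop:hil} without any proof (it is presented right after the citation of the existence and uniqueness of Hilbert bases, evidently as an immediate consequence), so there is no official argument to compare against; what you wrote is the natural justification and supplies the missing details. Both directions are handled properly via the standard fact that the Hilbert basis of a pointed rational cone consists of the nonzero lattice points that cannot be written as a sum of two nonzero lattice points of the cone, and you correctly isolate the only step where the hypothesis on $A$ enters: nonnegativity of the entries guarantees that if $\vec{y}\leq\vec{x}$ are levelers with levels $\beta$ and $\alpha$, then $\beta\leq\alpha$, so $\vec{z}=\vec{x}-\vec{y}$ is again a leveler and $\vec{x}=\vec{y}+\vec{z}$ is a genuine decomposition in $\mathcal{Z}(A)$. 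Two cosmetic caveats, neither of which affects the substance: the zero vector vacuously satisfies the paper's definition of an irreducible leveler yet can never belong to a minimal generating set, so the statement must be read as referring to nonzero irreducible levelers; and your passage from ``$\vec{z}$ is a leveler'' to ``$\vec{z}\in\mathcal{Z}(A)$'' relies on the paper's identification of $\mathcal{Z}(A)=\mathcal{C}(A)\cap\Z^m$ with the set of all levelers, which is asserted just before the proposition.
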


In Section~\ref{sec:main}, we prove our main theorem (Theorem~\ref{thm:main}) using tools 
from combinatorial optimization (in particular Carath\'{e}odory's Theorem).
In Section~\ref{sec:app-vsp}, we apply Theorem~\ref{thm:main} to prove some Ramsey-type 
statements about vector space multipartitions with respect to some 
irreducibility criteria that we shall define later.
\section{Proof of Theorem~\ref{thm:main}}\label{sec:main}
The proof of our main theorem relies on Theorem~\ref{thm:UB1} in Section~\ref{sec:UB1}, 
Theorem~\ref{thm:UB2} in Section~\ref{sec:UB2}, and Theorem~\ref{thm:LB} in
Section~\ref{sec:LB}.
\subsection{The first upper bound for $\ell(n,k)$}\label{sec:UB1}\

For any matrix $A$, let $\ell(A)$ denote its number of rows, 
and let $|A|$ denote its determinant if $A$ is a square matrix.
For any rational vector $\vec{x}$, let $\vec{x}_i$ denote its $i$th entry and let 
$r_x$ be the smallest positive integer such that $r_x\vec{x}$ is integral, i.e., the entries 
of $r_x\vec{x}$ are all integers. 

For any vector $\vec{x}\in \F(A)$ (thus, $A^T\vec{x}=\bo$ and $\vec{x}>\vec{0}$) and any 
integer $r> 0$ such that $r\vec{x}$ is an integral vector, let $L(A,r,\vec{x})$ be the matrix obtained by 
stacking $rx_i$ copies of $A_i$ for $1\leq i\leq m$.  Note that we define $L(A,r,\vec{x})$ 
up to a permutation of rows.

\begin{lemma}\label{bfs:irr}
 If  $A\in M_{n,n}(\Z)$ is an invertible $k$-matrix and  $\vec{x}\in\F(A)$, then  $L(A, r_x,\vec{x})$  
is irreducible.
\end{lemma}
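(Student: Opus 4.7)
The plan is to translate irreducibility of $M:=L(A,r_x,\vec{x})$ into a linear constraint on integer vectors and then exploit the invertibility of $A$ together with the minimality built into $r_x$. Set $\vec{u}=r_x\vec{x}\in\Z^n$; by definition $M$ consists of $u_i$ copies of row $A_i$ for $1\leq i\leq n$. Deleting some but not all rows of $M$ so the result remains level corresponds exactly to choosing $\vec{y}\in\Z^n$ with $\vec{0}\leq\vec{y}\leq\vec{u}$, $\vec{y}\notin\{\vec{0},\vec{u}\}$, and $A^T\vec{y}=\beta\bo$ for some integer $\beta\geq 0$. So I need to show no such $\vec{y}$ exists.

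The first step is immediate from invertibility: $A^T\vec{x}=\bo$ together with $A^T$ invertible forces $\vec{y}=\beta\vec{x}=(\beta/r_x)\vec{u}$ for any solution of $A^T\vec{y}=\beta\bo$. The main step is then to show that $\beta/r_x$ must be a nonnegative integer. Let $S=\{i:x_i>0\}$; outside $S$ both $u_i$ and $y_i$ vanish, so the integrality of $\vec{y}$ reduces to $(\beta/r_x)u_i\in\Z$ for $i\in S$. The key observation is that $\gcd(u_i:i\in S)=1$: otherwise dividing $r_x$ by this common factor would yield a smaller positive integer making $\vec{x}$ integral, contradicting the minimality in the definition of $r_x$. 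Writing $\beta/r_x=a/b$ in lowest terms then forces $b\mid u_i$ for every $i\in S$, hence $b=1$ and $\beta/r_x\in\Z$.

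Setting $\beta=mr_x$ with $m$ a nonnegative integer gives $\vec{y}=m\vec{u}$, and the squeeze $\vec{0}\leq m\vec{u}\leq\vec{u}$ combined with $\vec{u}\neq\vec{0}$ (which holds because $A^T\vec{x}=\bo\neq\vec{0}$) leaves only $m=0$ or $m=1$, contradicting $\vec{y}\notin\{\vec{0},\vec{u}\}$. Hence no such $\vec{y}$ exists and $M$ is irreducible. I expect the gcd/minimality step to be the most delicate piece of the write-up; the trick is to confine the gcd argument to the support $S$ of $\vec{x}$, after which everything else is a one-line consequence of the invertibility of $A$.
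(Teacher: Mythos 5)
Your overall architecture is the same as the paper's: use the invertibility of $A^T$ to force any sub-level-matrix to correspond to $\vec{y}=\beta\vec{x}$, then invoke the minimality of $r_x$ to squeeze $\vec{y}$ into $\{\vec{0},r_x\vec{x}\}$. However, the justification of your ``key observation'' is wrong as stated. You claim $\gcd(u_i: i\in S)=1$ because a common factor $d>1$ could be divided out of $r_x$ to produce a smaller positive integer witnessing integrality of $\vec{x}$. But $d$ need not divide $r_x$, so $r_x/d$ need not be an integer, and the definition of $r_x$ only asserts minimality over positive \emph{integers}. Concretely, for $\vec{x}=(2/3,4/3)^T$ one has $r_x=3$ and $\vec{u}=(2,4)^T$ with $\gcd=2$, yet no positive integer smaller than $3$ makes $\vec{x}$ integral; so the general principle you appeal to is false. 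The claim does happen to hold in the setting of the lemma, but for a reason you never use: since $A$ has integer entries and $A^T\vec{u}=r_x\bo$, every $d$ dividing all the $u_i$ also divides $r_x$, and only then does your minimality argument apply. Without that extra line, the central step of your ``main step'' is unsupported.

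The gap is also avoidable: the whole detour through $\beta/r_x\in\Z$ is unnecessary. Once you have $\vec{y}=\beta\vec{x}$ with $\vec{y}\in\Z^n$ and $\beta$ a nonnegative integer (it is a column sum of a nonnegative integer matrix), either $\beta=0$ and $\vec{y}=\vec{0}$, or $\beta>0$ and $\beta\vec{x}=\vec{y}$ is integral, whence $\beta\geq r_x$ by the definition of $r_x$; combined with $\vec{0}\leq\vec{y}\leq r_x\vec{x}$ and $\vec{x}\geq\vec{0}$ this forces $\vec{y}=r_x\vec{x}$. This is exactly the paper's proof, and it applies minimality to the integer $\beta$ directly rather than to the rational $\beta/r_x$, which is where your write-up goes astray. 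Either repair (adding the observation $d\mid r_x$ via $A^T\vec{u}=r_x\bo$, or replacing the gcd argument by the direct comparison $\beta\geq r_x$) yields a complete proof.
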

\begin{proof}
If the lemma does not hold, then there exists  $\vec{y} \in \Z^n$,  with $\vec{0}<\vec{y}<r_x\vec{x}$, 
such that  $A^T \vec{y}$ is level, say with column sums $t>0$. Then  $A^T\vec{y}=t\bo$, so  
$A^T t^{-1}\vec{y}=\bo=A^T\vec{x}$.
Thus  $t^{-1}\vec{y}=\vec{x}$.  But then  $t\vec{x}=\vec{y}$  is integral, so by the definition of  
$r_x$  we have $t \geq r_x$. This contradicts the assumption that  $\vec{y}<r_x\vec{x}$.
\end{proof}
A {\em convex combination} of the vectors $\vec{x}^{(1)},\ldots, \vec{x}^{(t)}$ is an expression of the form
\[\lambda_1\vec{x}^{(1)}+\ldots+\lambda_t\vec{x}^{(t)}\mbox{ with } \lambda_i\in \R^+
\mbox{ for $1\leq i\leq t$, and }\sum_{i=1}^t\lambda_i=1.\]
If $\lambda_i\in\Q^+$ for all $1\leq i\leq t$, then the convex combination is called {\em rational}. 
%
\begin{lemma}\label{lem:bfs}
Let $n>1$ and $k\geq 1$ be integers. Suppose that $H$ is an irreducible $k$-matrix with $n$ columns, 
at least $3$ rows, and a $0$-entry in each row. Then there exists a $k$-matrix $A=A(H)$ with $n$ columns such that\\
\n $(i)$ $A$ has rank $n$ and $m=\frac{1}{2}\left((k+1)^n-k^n-1\right)$ distinct rows,\\
\n $(ii)$ $H=L(A,r_h,\vec{h})$, where $\vec{h}\in\F(A)$ is a rational convex combination of the BFS in $\B(A)$.
\end{lemma}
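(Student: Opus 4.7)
The plan is to construct $A$ as a transversal of a natural involutive pairing on the admissible rows. Let $\mathcal{V} = \{\vec{v} \in \{0,1,\ldots,k\}^n : \vec{v} \neq \vec{0} \text{ and } v_i = 0 \text{ for some } i\}$, so $|\mathcal{V}| = (k+1)^n - k^n - 1 = 2m$. Define $\pi: \mathcal{V} \to \mathcal{V}$ by $\pi(\vec{v}) = c(\vec{v})\bo - \vec{v}$, where $c(\vec{v}) = \max_i v_i$. A routine check will confirm that $\pi$ is a fixed-point-free involution (the image has a zero at any argmax of $\vec{v}$ and is nonzero because $\vec{v}$ itself has a zero), so $\mathcal{V}$ partitions into $m$ pairs $\{\vec{v},\pi(\vec{v})\}$, each summing to $c(\vec{v})\bo$. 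Since $H$ is irreducible with at least three rows, no row of $H$ is zero (a zero row would be a proper level submatrix) and every row has a zero entry, so each distinct row of $H$ lies in $\mathcal{V}$; moreover $H$ cannot contain both members of a pair, for otherwise those two rows form a proper level submatrix after deleting the other $m-2$ rows. The distinct rows of $H$ therefore hit at most one element from each pair, and I take $A$ to consist of the distinct rows of $H$ together with one arbitrary representative from each remaining pair, giving exactly $m$ distinct rows.

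The main obstacle is verifying $\mathrm{rank}(A)=n$. I will argue by contradiction: suppose the rows of $A$ all lie in a hyperplane $W \subsetneq \R^n$. Then the rows of $H$ lie in $W$, and the level identity $\sum_{j} a_j \vec{v}_j = \alpha \bo$ (with $a_j$ the multiplicity of the $j$-th distinct row $\vec{v}_j$ in $H$ and $\alpha > 0$ the common column sum) forces $\bo \in W$. But once $\bo \in W$, the formula $\pi(\vec{v}) = c(\vec{v})\bo - \vec{v}$ gives $\vec{v} \in W \iff \pi(\vec{v}) \in W$, so each pair is either entirely inside $W$ or entirely outside; since $A$ contains a representative of every pair, no pair can lie entirely outside $W$, forcing $\mathcal{V} \subset W$. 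This contradicts $e_1,\ldots,e_n \in \mathcal{V}$ spanning $\R^n$. The delicate point is the dichotomy between $\bo \in W$ (pairs never straddle $W$) and $\bo \notin W$ (impossible once the $H$-rows are in $W$, since their span contains $\alpha\bo$); this is exactly what makes any transversal of the pairing automatically span $\R^n$, yielding (i).

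For (ii), set $\vec{h} \in \Q^m$ by $h_i = a_j/\alpha$ when $A_i$ equals the $j$-th distinct row $\vec{v}_j$ of $H$, and $h_i = 0$ otherwise. A direct computation gives $A^T \vec{h} = \bo$ and $\vec{h} > \vec{0}$, hence $\vec{h} \in \F(A)$. Irreducibility of $H$ (Proposition~\ref{prop:hil}) yields $\gcd(\alpha, a_1, \ldots, a_s) = 1$, since otherwise dividing every $a_j$ and $\alpha$ by this common factor produces a strictly smaller leveler; a short calculation then gives $r_h = \alpha$, so $H = L(A, r_h, \vec{h})$. Finally, because $A$ has no zero rows, the polyhedron $\F(A) = \{\vec{x} : A^T \vec{x} = \bo,\ \vec{x} \geq \vec{0}\}$ is bounded, hence a rational polytope whose extreme points are precisely the elements of $\B(A)$; Carath\'{e}odory's theorem then writes $\vec{h}$ as a convex combination of elements of $\B(A)$, and the rationality of the data lets the coefficients be chosen in $\Q^+$, completing (ii).
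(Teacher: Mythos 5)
Your proposal is correct and follows essentially the same route as the paper: you build $A$ as a transversal of the complementation pairing on the nonzero vectors with a zero coordinate (the paper's conditions $(C_1)$--$(C_3)$), obtain the rank via $\bo$ together with the fact that each $e_i$ or its complement $\bo-e_i$ appears as a row, and then express $\vec{h}=\vec{a}/\alpha$ as a rational convex combination of basic feasible solutions. The only differences are cosmetic (your rank argument is the contrapositive of the paper's direct span computation, and you re-derive the convex-combination step from boundedness plus Carath\'eodory where the paper cites a standard polytope theorem).
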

\begin{proof}
Let $R=(a_1,\ldots,a_n)$ be a row of $H$ and define the complement of $R$ to be the 
vector $R^c=(t-a_1,\ldots,t-a_n)$, where $t=\max_{1\leq i\leq n} a_i$. 
Since $H$ is an irreducible $k$-matrix with at least $3$ rows and 
a $0$-entry in each row, the following conditions hold.\\
\n $(C_1)$ $a_j=0$ for some $j$, $1\leq j\leq n$;\\ 
\n $(C_2)$ $R$ is not the zero row vector;\\ 
\n $(C_3)$ $R^c$ is not a row of $H$.

 Note that $(C_1)$ implies that no row of $H$ is equal to its complement, and 
$(C_3)$ does not hold if $H$ has exactly two rows (namely $R$ and $R^c$).

Let $A=A(H)$ be any $k$-matrix obtained by stacking together one copy of each distinct row of $H$ 
and all the row vectors of length $n$ with entries in $[0,k]=\{0,1,\ldots,k\}$
such that the conditions $(C_1)$--$(C_3)$ hold for every row vector $R$ of $A$. Note that 
there are $(k+1)^n$ row vectors of length $n$ with entries in $[0,k]$, $k^n$ such row vectors 
that do not satisfy $(C_1)$, and one row vector that does not satisfy $(C_2)$. Moreover, for any 
two distinct row vectors $R_i$ and $R_j$ in $A$, the row vectors $R_i,R^c_i,R_j,R^c_j$ are all 
distinct and satisfy $(C_1)$ and $(C_2)$. Thus, the number of rows of $A$ is 
\[m=\frac{1}{2}\left((k+1)^n-k^n-1\right).\]
Furthermore, the matrix $A$ has rank $n$ since the linear span, $\S$, of its rows
contains all the rows of the $n\times n$ identity matrix $I_n$. To see this, first note that 
$\bo^T=(1,\ldots,1)\in \S$ since $H$ is a level matrix and its distinct rows are in $A$. Next, it follows 
from the definition of $A$ that if the row $e_i$ of $I_n$ is not a row of $A$, then its complement 
$e_i^c=(1,\ldots,1)-e_i$ is.  Since $\bo^T$ and $e_i^c$ are in $\S$, then the linear 
combination $\bo^T-e_i^c=e_i$ is also in $\S$. Thus, all the rows of $I_n$ are in $\S$.

By definition of $A$, it follows that $H$ can be obtained (up to a row permutation) 
by stacking some $v_i\geq 0$ copies of $A_i$ (the $i$th row of $A$) for $1\leq i\leq m$. Hence, the vector 
$\vec{v}=(v_1,\ldots,v_m)^T$ satisfies $A^T\vec{v}=t\bo$ for some positive integer $t$. 
If we let  $\vec{h}=\frac{1}{t}\vec{v}$, then 
$H=L(A,t,\vec{h})=L(A,r_h,\vec{h})$, where $r_h=t$ follows from the irreducibility of $H$.
Moreover, $A^T\vec{h}=\bo$ and $h_i\geq 0$ for $1\leq i\leq m$. Thus, $\vec{h}\in \F(A)$.
It follows from the theory of polytopes (e.g., see~\cite[Theorem~$2.3$]{PS}) 
that $\vec{h}$ is a rational convex combination of the BFS in $\B(A)$.
\end{proof}

We define the {\em complement of a matrix} $B$, denoted by $B^c$, to be the matrix obtained by 
replacing each entry $b$ of $B$ by $t-b$, where $t$ is the maximum entry in $B$.

In the next lemma, we give an upper bound on the  number of rows in the matrix $L(A,r_y,\vec{y})$ 
for the case when $A$ is an invertible $k$-matrix and $\vec{y}\in\F(A)$.
\begin{lemma}\label{lem:LG}
Let $n>1$ and $k\geq 1$ be integers.
If $A\in M_{n,n}(\Z)$ is an invertible $k$-matrix, $\vec{y}\in\F(A)$, and $L_A=L(A,r_y,\vec{y})$, then 
\[\ell(L_A)\leq (k/2)^{n-1}(n+1)^{(n+1)/2}.\]
\end{lemma}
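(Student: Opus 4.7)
The plan is to invoke Cramer's rule, reduce $\ell(L_A)$ to a sum of subdeterminants of $A$, and then apply a Hadamard-type bound together with the linear identity $\vec{y}^T A = \bo^T$.

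Since $A$ is invertible and $A^T \vec{y} = \bo$, Cramer's rule gives $y_i = \det(B_i)/\det(A)$, where $B_i$ is the $k$-matrix obtained from $A$ by replacing row $i$ with $\bo^T$. Because $|\det(A)|\vec{y}$ has integer entries (up to sign, the $\det(B_i)$), and $r_y$ is the smallest positive integer with $r_y \vec{y} \in \Z^n$, $r_y$ divides $|\det(A)|$; hence $r_y y_i = |\det(B_i)| \cdot (r_y/|\det(A)|)$ is a positive integer bounded by $|\det(B_i)|$. Summing, $\ell(L_A) = \sum_i r_y y_i \leq \sum_i |\det(B_i)|$, and since each $y_i > 0$ forces $\det(B_i)$ to share the sign of $\det(A)$, in fact $\sum_i |\det(B_i)| = |\det(A)| \sum_i y_i$.

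Next, right-multiplying $\vec{y}^T A = \bo^T$ by $\bo$ yields $\sum_i y_i s_i = n$, where $s_i$ is the row sum of $A_i$. Invertibility of $A$ forces $s_i \geq 1$, so $\sum_i y_i \leq n/s_{\min}$, where $s_{\min} = \min_i s_i$. Combining this with the cofactor expansion $|\det(A)| \leq s_{\min} \cdot \max_j |C_{i_0, j}|$ along a row $A_{i_0}$ of minimal sum, one obtains $\ell(L_A) \leq n \cdot \max_j |C_{i_0, j}|$. Each cofactor $|C_{i_0, j}|$ is the absolute determinant of an $(n-1) \times (n-1)$ submatrix of $A$ with entries in $\{0, 1, \ldots, k\}$; the sharp Hadamard-type bound for $[0,k]$-matrices (via the standard bordering trick, which transforms such a matrix into a $\pm 1$-matrix of one larger size and applies the classical Hadamard inequality) gives $|C_{i_0, j}| \leq k^{n-1} n^{n/2}/2^{n-1}$.

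The main technical obstacle is to replace the resulting preliminary estimate $\ell(L_A) \leq k^{n-1} n^{n/2+1}/2^{n-1}$ by the sharper target $(k/2)^{n-1}(n+1)^{(n+1)/2}$. A direct cofactor expansion loses roughly a factor of $\sqrt{n}$ because $n^{n/2+1}$ is slightly larger than $(n+1)^{(n+1)/2}$ for $n \geq 4$. To close this gap, one should apply the sharp Hadamard bound not to each cofactor individually but to the single $(n+1) \times (n+1)$ bordered matrix
\[
\widehat{A} = \begin{pmatrix} A^T & \bo \\ \bo^T & 0 \end{pmatrix},
\]
whose determinant equals $|\det(A)| \sum_i y_i$ by a Schur-complement computation. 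Exploiting the all-ones border and the zero corner of $\widehat{A}$ via a carefully chosen bordering (centering the upper-left $n \times n$ block while treating the border rows and columns separately) should yield the desired Hadamard-type bound with the correct exponent $(n+1)^{(n+1)/2}/2^{n-1}$, completing the proof.
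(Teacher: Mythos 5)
Your first chain of estimates is correct as far as it goes (Cramer's rule, $r_y\mid|\det A|$, $\ell(L_A)\le|\det A|\sum_i y_i$, the row-sum identity, and the cofactor expansion), but as you acknowledge it only yields $(k/2)^{n-1}n^{n/2+1}$, which exceeds the stated bound once $n\ge 4$. Everything therefore hinges on the second part, and that part is not a proof: you assert that a ``carefully chosen bordering'' of $\widehat{A}$ \emph{should} give $|\det\widehat{A}|\le k^{n-1}(n+1)^{(n+1)/2}/2^{n-1}$, but no such bordering is exhibited, and it is not clear one exists. The generic maximum-determinant bound for an $(n+1)\times(n+1)$ matrix with entries in $[0,k]$ is $(k/2)^{n+1}(n+2)^{(n+2)/2}$, larger than what you need by roughly a factor of $(k/2)^2\sqrt{n}$, so the all-ones border and zero corner must be exploited in an essential way, and the standard bordering/centering trick does not do this (replacing $\widehat{A}$ by $2\widehat{A}/k-J$ does not scale $\det\widehat{A}$ by a constant, since $\det(\widehat{A}-cJ)$ is an affine, not linear, function of $c$).

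The inequality you want for $\widehat{A}$ is in fact true, but the mechanism is the complement of $A$, not a Hadamard bound on $\widehat{A}$ itself. Writing $A^c=kJ-A$, the matrix determinant lemma gives $\det(A^c)=(-1)^n\det(A)\bigl(1-k\bo^TA^{-1}\bo\bigr)$, and since $\det\widehat{A}=-\det(A)\,\bo^T(A^T)^{-1}\bo$ one gets $k\det\widehat{A}=(-1)^n\det(A^c)-\det(A)$, hence $k|\det\widehat{A}|\le|\det A|+|\det A^c|\le 2(k/2)^n(n+1)^{(n+1)/2}$, which is exactly the claimed bound. This identity is precisely the engine of the paper's proof: there one pairs each row $\vec{w}$ of $A^T$ with its complement $\vec{w^c}$, derives $\ell(L_A)=r_y\sum_j y_j=(r_y+r_{y'})/k$ where $r_{y'}$ is the denominator attached to $A^c$, bounds $r_y\le|\det A^T|$ and $r_{y'}\le|\det(A^c)^T|$ by Cramer's rule, and applies the $[0,k]$ maximum-determinant bound $(k/2)^n(n+1)^{(n+1)/2}$ to each of the two determinants. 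So your reduction to $|\det\widehat{A}|$ is a valid reformulation of the problem, but the step that actually closes the $\sqrt{n}$ gap --- the complement identity --- is missing, and without it the argument establishes only the weaker bound from your first paragraph.
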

\begin{proof}
Since $\vec{y}\in\F(A)$, then $A^T\vec{y}=\bo$. 
Recall that $r_y$ is by definition the smallest positive integer such that
$r_y\vec{y}$ is an integral vector. 
Since $|A^T|\cdot\vec{y}$ is an integral vector by Cramer's rule, it follows that $r_y\leq |A^T|$.

Since $A$ is a $k$-matrix, we may assume that its largest entry is $k$, otherwise $A$ is a $k'$-matrix
with largest entry $k'<k$. 
By definition, $L_A=L(A,r_y,\vec{y})$ is also a $k$-matrix and $L_{A}^c$ denotes its complement.
Since $L_A$ is irreducible by Lemma~\ref{bfs:irr}, then we can directly verify that $L_{A}^c$ is also irreducible. 
Moreover, if we let $A^c$ be the complement of $A$, then there exists $y'\in\F(A^c)$  such that
$L_{A}^c=L_{A^c}=L(A^c,r_{y'},\vec{y'})$,  $r_{y'}\vec{y'}=r_y\vec{y}$, 
 and $\vec{z'}=|(A^c)^T|\cdot ((A^c)^T)^{-1}\bo=|(A^c)^T|\cdot\vec{y'}$ is an integral vector
 (thus, $r_{y'}\leq |(A^c)^T|$).
If $\vec{w}=(a_{1},\ldots,a_{n})$ is the $i$th row vector of $A^T$, then 
$\vec{w^c}=(k-a_{1},\ldots,k-a_{n})$ is the $i$th row vector of $(A^c)^T$.
Since $A^T\vec{y}=\bo$ and $(A^c)^T\vec{y'}=\bo$, it follows that $\vec{w}\cdot\vec{y}=1$ and 
$\vec{w^c}\cdot \vec{y'}=1$. By using these observations and $r_{y'}\vec{y'}=r_y\vec{y}$, we obtain
\[r_y+r_{y'}=\vec{w}\cdot r_y\vec{y}+\vec{w^c}\cdot r_{y'}\vec{y'}
=\sum_{j=1}^n a_{j}r_y y_j+\sum_{j=1}^n (k-a_{j})r_y y_j
=r_y k\sum_{j=1}^n y_j,\]
so that 
\[\sum_{j=1}^n y_i=\frac{r_y+r_{y'}}{r_y k}.\]
Thus, the number of rows of $L_A$ (or $L_{A^c}$) is by definition
\begin{align}\label{eq:det}
\ell(L_A)=r_y\sum_{j=1}^n \vec{y}_j=\frac{r_y+r_{y'}}{k}
&\leq \frac{|A^T|+|(A^c)^T|}{k}\notag\\
&\leq \frac{2(k/2)^n(n+1)^{(n+1)/2}}{k}\\
&= (k/2)^{n-1}(n+1)^{(n+1)/2},\notag
\end{align}
where the inequality~\eqref{eq:det} holds since 
$(k/2)^{n}(n+1)^{(n+1)/2}$ is an upper bound for the determinant of any invertible $n\times n$ matrix 
in which all entries are real and the absolute value of any entry is at most $k$ 
(see~\cite{BC,FS}).
\end{proof}
To prove the main theorem in this section, we also use a theorem of Carath\'{e}odory,
which we shall state after a few definitions, following the account of Ziegler~\cite{GZ}. 

Let $S=\{\vec{x}^{(1)},\ldots, \vec{x}^{(t)}\}$ be a set of vectors from $\R^n$. 
The {\em affine hull} of $S$ is 
\[{\rm Aff}(S)=\{\lambda_1\vec{x}^{(1)}+\ldots+\lambda_t\vec{x}^{(t)} :\;  
\lambda_i\in \R\mbox{ and }\sum_{i=1}^t\lambda_i=1\}.\]
The {\em convex hull} of $S$, which we denote by ${\rm Conv}(S)$, is the set of all its convex
combinations.
A set $I$ of vectors in $\R^n$ is {\em affinely independent} if  every proper subset of $I$ has a 
smaller affine hull.
The dimension of an affine hull $G$ is $g-1$, where $g$ is the cardinality of largest affinely 
independent subset $I\subseteq G$.
Finally, the dimension of a convex hull ${\rm Conv}(S)$ is the dimension of the corresponding 
affine hull ${\rm Aff}(S)$. 
\begin{theorem}[Carath\'{e}odory's Theorem~\cite{GZ}]\label{thm:Ca} 
Let $S=\{\vec{x}^{(1)},\ldots, \vec{x}^{(t)}\}$ be a set of vectors from $\R^n$ such that 
${\rm Conv}(S)$ has dimension $d$. If $\vec{h}\in {\rm Conv}(S)$, then $\vec{h}$ is the 
convex combination of at most $d+1$ properly chosen vectors from $S$.
\end{theorem}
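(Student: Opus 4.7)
The plan is to prove Carath\'{e}odory's theorem by an iterative reduction argument. Starting with any convex combination $\vec{h}=\sum_{i=1}^t\lambda_i\vec{x}^{(i)}$ with $\lambda_i>0$, I would show that if $t>d+1$, one summand can always be eliminated while keeping the expression a convex combination representing the same $\vec{h}$. Iterating this step as long as $t>d+1$ produces the desired representation with at most $d+1$ terms.

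First I would invoke the dimension hypothesis. Since ${\rm Conv}(S)$ has dimension $d$, the largest affinely independent subset of $S$ has size $d+1$. Hence any $t>d+1$ of the vectors $\vec{x}^{(1)},\ldots,\vec{x}^{(t)}$ must be affinely dependent, so there exist scalars $\mu_1,\ldots,\mu_t\in\R$, not all zero, with
\[
\sum_{i=1}^t\mu_i\vec{x}^{(i)}=\vec{0}\qquad\text{and}\qquad\sum_{i=1}^t\mu_i=0.
\]
Since the $\mu_i$ sum to zero and are not all zero, at least one $\mu_i$ is strictly positive.

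Next I would introduce the one-parameter family $\lambda_i(\alpha)=\lambda_i-\alpha\mu_i$. Because $\sum\mu_i=0$ and $\sum\mu_i\vec{x}^{(i)}=\vec{0}$, the identities $\sum\lambda_i(\alpha)=1$ and $\sum\lambda_i(\alpha)\vec{x}^{(i)}=\vec{h}$ hold for every $\alpha\in\R$. The idea is to increase $\alpha$ from $0$ until one of the coefficients $\lambda_i(\alpha)$ first hits zero: set
\[
\alpha^{*}=\min\Bigl\{\frac{\lambda_i}{\mu_i}\;:\;\mu_i>0\Bigr\}.
\]
Then $\alpha^{*}>0$, all coefficients $\lambda_i(\alpha^{*})$ are nonnegative, and at least one index $j$ satisfies $\lambda_j(\alpha^{*})=0$. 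This yields a new convex combination of $\vec{h}$ that uses at most $t-1$ of the vectors in $S$.

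The argument is essentially a pigeonhole-plus-pivoting step, so I do not expect a deep obstacle. The only subtle point is verifying that the choice of $\alpha^{*}$ simultaneously preserves nonnegativity of every $\lambda_i(\alpha^{*})$ and forces at least one to vanish; this is what the specific minimum over indices with $\mu_i>0$ accomplishes. Repeating the reduction finitely many times terminates once only $d+1$ (or fewer) coefficients remain nonzero, at which point the surviving vectors must be affinely independent, and the theorem follows.
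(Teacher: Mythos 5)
Your argument is the standard and correct proof of Carath\'{e}odory's theorem: the affine-dependence relation $\sum\mu_i\vec{x}^{(i)}=\vec{0}$, $\sum\mu_i=0$ combined with the pivot $\alpha^{*}=\min\{\lambda_i/\mu_i:\mu_i>0\}$ does exactly what you claim, and the iteration terminates. The paper itself offers no proof to compare against --- it quotes this theorem from Ziegler's \emph{Lectures on Polytopes} and uses it as a black box --- so your write-up simply supplies a proof the authors omitted. One cosmetic remark: your closing claim that the surviving vectors ``must be affinely independent'' is neither needed nor guaranteed if you stop as soon as at most $d+1$ coefficients remain nonzero; the bound on the number of terms is all the theorem asserts.
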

We can now prove the following theorem.
\begin{theorem}\label{thm:UB1}
Let $n>1$ and $k>0$ be integers. If $H$ is an irreducible $k$-matrix with $n$ columns, then
\[\ell(H)\leq k^{n-1}2^{-n}(n+1)^{(n+1)/2}\left((k+1)^n-k^n+1\right).\]
\end{theorem}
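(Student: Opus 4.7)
The plan is to combine Lemmas~\ref{bfs:irr}, \ref{lem:bfs}, and~\ref{lem:LG} with Carath\'eodory's theorem. The cases $\ell(H) \leq 2$ clearly satisfy the claimed bound, so I assume $H$ has at least three rows. I first reduce to the case in which every row of $H$ has a $0$-entry: if a row $R_i$ has minimum entry $c_i > 0$, replace $R_i$ by $R_i - c_i\bo^T$; the resulting matrix $H'$ has the same number of rows, is still level (every column sum decreases uniformly by $\sum_i c_i$), and is still irreducible (any level sub-deletion of $H'$ lifts to one of $H$). Since an irreducible $k$-matrix with at least two rows cannot contain a constant row (deleting such a row would preserve levelness), no row of $H'$ is identically zero, so $H'$ satisfies the hypotheses of Lemma~\ref{lem:bfs}.

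Applying Lemma~\ref{lem:bfs} to $H'$ yields a rank-$n$ $k$-matrix $A$ with $m = \tfrac{1}{2}((k+1)^n - k^n - 1)$ distinct rows and a vector $\vec{h} \in \F(A)$ such that $H' = L(A, r_h, \vec{h})$ and $\vec{h}$ is a rational convex combination of the basic feasible solutions $\B(A)$. Because $\F(A) = {\rm Conv}(\B(A))$ is a bounded polytope of dimension $d \leq m$ in $\Q^m$, Carath\'eodory's theorem (Theorem~\ref{thm:Ca}) produces a representation
\[
\vec{h} = \sum_{j=1}^{d+1} \lambda_j\, \vec{b}^{(j)}, \qquad \lambda_j \in \Q^+,\ \ \sum_{j=1}^{d+1} \lambda_j = 1,
\]
with each $\vec{b}^{(j)} \in \B(A)$ supported on an $n$-element index set $I_j$ and determined by an invertible $n \times n$ submatrix $C_j$ of $A$. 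Writing $s_j = r_{b^{(j)}}$ and $T_j = \sum_i b^{(j)}_i$, Lemma~\ref{lem:LG} applied to $C_j$ gives $s_j T_j \leq K := (k/2)^{n-1}(n+1)^{(n+1)/2}$.

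The crux of the proof, which I expect to be the main obstacle, is a coupling step: I would show that the irreducibility of $H$ forces $r_h \lambda_j \leq s_j$ for every $j$. Suppose instead that $r_h \lambda_j > s_j$ for some $j$. Since the other terms of the convex combination are nonnegative, $r_h h_i \geq r_h \lambda_j b^{(j)}_i > s_j b^{(j)}_i$ at every index $i$ with $b^{(j)}_i > 0$, so $\vec{y} := r_h\vec{h} - s_j \vec{b}^{(j)}$ is a nonnegative integer vector that is strictly positive somewhere on $I_j$, with $A^T \vec{y} = (r_h - s_j)\bo$ and $r_h > s_j$. Thus $\vec{y}$ is a leveler of $A$ satisfying $\vec{0} < \vec{y} < r_h \vec{h}$, contradicting the irreducibility of $H$. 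The borderline case $r_h \lambda_j = s_j$ either forces $\vec{h} = \vec{b}^{(j)}$ (so that $\ell(H) \leq K$ follows directly from Lemma~\ref{lem:LG}) or produces an analogous smaller leveler. Once this coupling is in hand, summing yields
\[
\ell(H) = r_h \sum_i h_i = \sum_{j=1}^{d+1} (r_h \lambda_j)\, T_j \leq \sum_{j=1}^{d+1} s_j T_j \leq (d+1)K \leq (m+1)K,
\]
which, on substituting the values of $m$ and $K$, is exactly the stated bound $k^{n-1} 2^{-n} (n+1)^{(n+1)/2}((k+1)^n - k^n + 1)$.
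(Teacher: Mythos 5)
Your proposal is correct and follows essentially the same route as the paper's proof: reduce to rows with a $0$-entry, invoke Lemma~\ref{lem:bfs} to write $H=L(A,r_h,\vec{h})$ with $\vec{h}$ a rational convex combination of basic feasible solutions, use Carath\'eodory's theorem to limit the number of terms to $d+1\leq m+1$, establish the coupling inequality $r_h\lambda_j\leq r_j$ from irreducibility (your subtracted leveler $\vec{y}$ is the paper's proper level submatrix $L(A,r_h,\vec{h})-L(A,r_j,\vec{x}^{(j)})$), and bound each term by Lemma~\ref{lem:LG}. The only cosmetic difference is your extra discussion of the borderline case $r_h\lambda_j=r_j$, which is unnecessary since the weak inequality already suffices for the final summation.
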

\begin{proof}
Let $R=(a_1,\ldots,a_n)$ be a row of $H$ such that $a_0=\min_{1\leq j\leq n}a_j$ is positive.
Then it is easy to verify that the matrix $H'$ obtained from $H$ by replacing 
$R$ with $(a_1-a_0,\ldots,a_n-a_0)$ is also an irreducible $k$-matrix 
with the same number of rows as $H$. Since we are interested in bounding $\ell(n,k)$,
the maximum number of rows of an irreducible $k$-matrix with $n$ columns, we 
may assume (w.l.o.g.) that each row of $H$ contains a $0$-entry.
Since the theorem holds (by inspection) if $H$ has fewer than $3$ rows, we 
may also assume that $H$ has at least $3$ rows.

Thus, it follows from Lemma~\ref{lem:bfs} that there exists a matrix $A=A(H)$ with $m$
rows such that $H=L(A,r_h,\vec{h})$ for some $\vec{h}\in\F(A)$. Moreover, there exist nonnegative 
rational numbers $\lambda_1,\ldots,\lambda_t$, such that 
$\sum_{j=1}^t\lambda_j=1$ and $\vec{h}=\sum_{j=1}^t\lambda_j\vec{x}^{(j)}$, where 
$\vec{x}^{(j)}\in \B(A)$. For $1\leq j\leq t$, recall that $r_j=r_{x^{(j)}}$ 
is the smallest positive integer such that $r_j\vec{x}^{(j)}$ is an integral vector.
Thus,
\begin{eqnarray}\label{thm1:eq1.1}
r_h\vec{h} 
&=&\sum_{j=1}^t \left(\lambda_j\frac{r_h}{r_j} \right) r_j\vec{x}^{(j)}.
\end{eqnarray}
If  $\lambda_j\frac{r_h}{r_j}>1$ for some $j$, then  it follows from~\eqref{thm1:eq1.1} that 
$r_h\vec{h}>r_j\vec{x}^{(j)}$ since $\vec{h}\not=\vec{x}^{(j)}$. This would imply that matrix 
$H'=L(A,r_h,\vec{h})-L(A,r_j,\vec{x}^{(j)})$ (where the subtraction is done componentwise) 
is a proper level $k$-submatrix of $H=L(A,r_h,\vec{h})$, which contradicts the irreducibility of $H$. 
Hence, we must have $\lambda_j\frac{r_h}{r_j}\leq1$ for all $j$. Then this fact and~\eqref{thm1:eq1.1} yield
\begin{eqnarray}\label{thm1:eq1.2}
\ell(H)=r_h\sum_{i=1}^m h_i 
&=& r_h\sum_{j=1}^t \left(\lambda_j\sum_{i=1}^m \vec{x}^{(j)}_i \right)\cr
&=&\sum_{j=1}^t \left(\lambda_j\frac{r_h}{r_j}\sum_{i=1}^m r_j\vec{x}^{(j)}_i \right)\cr
&\leq & \sum_{j=1}^t\sum_{i=1}^m r_j\vec{x}^{(j)}_i.
\end{eqnarray}
Since $\vec{x}^{(j)}\in \B(A)$, there exists a matrix
$L_j=L(A,r_j,\vec{x}^{(j)})$ such that $\ell(L_j)=\sum_{i=1}^m r_j\vec{x}^{(j)}_i$
for $1\leq j\leq t$. Moreover, it follows from Theorem~\ref{thm:Ca} that $t\leq d+1$,
where $d$ is the dimension of the polytope generated by the vectors in $\B(A)$.
Thus, it follows from \eqref{thm1:eq1.2} and the preceding observations that
\begin{equation}\label{eq:mt1}
\ell(H)\leq (d+1)\max_{j} \ell(L_j).
\end{equation}

Since $\vec{x}^{(j)}\in \B(A)$ is a BFS, there exists a subset $I\subseteq \{1,\ldots,m\}$ of $n$ 
indices such that $\vec{x}^{(j)}_i=0$ for $i\not\in I$. Let $A^{(j)}$ be the $n\times n$ matrix 
containing the rows $A_i$ for each $i\in I$. Then $A^{(j)}$ is invertible and $\vec{y}^{(j)}=\left((A^{(j)})^T\right)^{-1}\bo$ satisfies $x^{(j)}_i=x_i$ for all $i\in I$. 
Hence, $L_j=L(A,r_{x^{(j)}},\vec{x}^{(j)})$ and $L_j'=L(A^{(j)},r_{y^{(j)}},\vec{y}^{(j)})$ are 
the same matrices up to a permutation of rows. Thus, it follows from Lemma~\ref{lem:LG} that 
\begin{equation}\label{eq:mt3}
\ell(L_j)=\ell(L_j')\leq (k/2)^{n-1}(n+1)^{(n+1)/2}.
\end{equation}

Since $d$ is at most the number of distinct of rows in $A$, it follows from Lemma~\ref{lem:bfs} that
\begin{equation}\label{eq:mt2}
d+1\leq \frac{(k+1)^n-k^n-1}{2}+1= \frac{(k+1)^n-k^n+1}{2},
\end{equation}
If now follows from~\eqref{eq:mt1}, \eqref{eq:mt2}, and~\eqref{eq:mt3} that 
\[
\ell(H)\leq k^{n-1}2^{-n}(n+1)^{(n+1)/2}\left((k+1)^n-k^n+1\right),
\]
which concludes the proof.
\end{proof}
%
\subsection{The second upper bound for $\ell(n,k)$}\label{sec:UB2}\

In this section, we establish another upper bound for $\ell(n,k)$ that is 
better than the upper bound provided by Theorem~\ref{thm:UB2} for $n\in \{2,3\}$.
\begin{theorem}\label{thm:UB2}
Let $n>1$ and $k>0$ be integers. If $H$ is an irreducible $k$-matrix with $n$ columns,  
then $\ell(H)<(2k)^{2^{n-1} -1}$.
\end{theorem}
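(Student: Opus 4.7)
The plan is to induct on $n$, with Theorem~\ref{thm:La} supplying the base case $n=2$: since $\ell(2,k)=2k-1 < 2k = (2k)^{2^{2-1}-1}$. For the inductive step, fix an irreducible $m\times n$ level $k$-matrix $H$ with $n\geq 3$ and assume the bound holds for matrices with $n-1$ columns. The idea is to bootstrap the inductive hypothesis using Lambert's $n=2$ theorem applied to a cleverly constructed two-column auxiliary matrix, so that a recursion of the form $\ell(n,k)\lesssim 2k\cdot\ell(n-1,k)^2$ produces the doubling in the exponent $2^{n-1}-1$.

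First I would drop the last column of $H$ to obtain an $m\times(n-1)$ level matrix $H'$ and then decompose the rows of $H$ into maximal pieces $P_1,\ldots,P_s$ such that, for each $i$, the restriction $P_i'$ of $P_i$ to its first $n-1$ columns is an irreducible level matrix. Such a decomposition is built by iteratively splitting any piece whose first $n-1$ columns are still reducible; the process terminates since each split strictly decreases row counts. By the inductive hypothesis, each $|P_i|\leq \ell(n-1,k) < (2k)^{2^{n-2}-1}$. If $s=1$ the desired bound is immediate, so I may assume $s\geq 2$.

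Next I would attach to each piece a \emph{defect} $d_i=\gamma^{(i)}-\alpha^{(i)}$, where $\alpha^{(i)}$ is the common column sum of $P_i'$ and $\gamma^{(i)}$ is the sum of the last column of $P_i$. The crucial points are: (a) $\sum_i d_i=0$ since $H$ is level overall; (b) each $d_i\neq 0$, else $P_i$ would itself be level, contradicting the irreducibility of $H$ given $s\geq 2$; and (c) no non-empty proper subsum of the $d_i$'s vanishes, else the corresponding union of pieces would be a proper level submatrix of $H$, again contradicting irreducibility. Setting $a_i=\max(d_i,0)$ and $b_i=\max(-d_i,0)$, the $s\times 2$ matrix with rows $(a_i,b_i)$ is then an irreducible level $K$-matrix with $K:=k\,\ell(n-1,k)$, because $|d_i|\leq K$ (from $|P_i|\leq\ell(n-1,k)$ with entries in $[0,k]$). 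Theorem~\ref{thm:La} applied to this matrix yields $s\leq 2K-1$.

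Finally, combining the bounds gives $m \leq s\,\ell(n-1,k)\leq (2K-1)\,\ell(n-1,k) < 2k\,\ell(n-1,k)^2$, and substituting the inductive hypothesis $\ell(n-1,k) < (2k)^{2^{n-2}-1}$ yields $m < 2k\cdot (2k)^{2(2^{n-2}-1)} = (2k)^{2^{n-1}-1}$, completing the induction. The step I expect to require the most care is the translation of the decomposition of $H$ into an irreducible two-column level matrix of defects: this demands simultaneously tracking the sizes of the pieces, the range of the entries $|d_i|$, and — most delicately — the equivalence between subsum-freeness of $(d_i)$ and irreducibility of $H$. Once that bookkeeping is in place, Lambert's theorem is the decisive move that drives the doubling recursion.
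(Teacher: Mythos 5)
Your proposal is correct and follows essentially the same route as the paper: induct on $n$, delete the last column, split $H$ into pieces whose truncations are irreducible, bound each piece by the inductive hypothesis, encode the pieces as rows of a two-column level $K$-matrix with $K=k\cdot(\text{piece bound})$, and invoke Lambert's theorem to bound the number of pieces, giving the recursion $\ell(n,k)<2k\,\ell(n-1,k)^2$. The only difference is cosmetic: the paper uses the pairs $(a_i,b_i)$ of column sums directly rather than the positive and negative parts of the defects $d_i=b_i-a_i$, which yields the same irreducible two-column $K$-matrix and the same bound.
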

\begin{proof}
For convenience define $r_n = {2^{n-1} -1}$. Then $r_2 = 1$ and $r_{n+1}=2r_n+1$.

The proof will be by induction on $n \geq 2$, where the $n=2$ case follows from Theorem~\ref{thm:La}. 
Assume the statement of the theorem holds for $n$, and let $H$ be an irreducible $k$-matrix 
of size $m\times (n+1)$. Let $\widehat{H}$ be the matrix obtained from $H$ by deleting the last column of $H$. 

Note that if a level matrix is reducible, then its rows can be rearranged to form a stack of two level matrices, 
and this division can be continued until a stack of irreducible matrices is attained.
Rearrange the rows of $H$ to form a new matrix 
consisting of a stack of $k$-matrices 
$M^{(i)}$ of size $m_i\times(n+1)$, $1 \leq i \leq t$, such that each matrix $\widehat{M}^{(i)}$ 
formed by deleting the last column of $M^{(i)}$ is an irreducible matrix. 
For $1 \leq i \leq t$, let $a_i$ be the common column sum of $\widehat{M}^{(i)}$, and let $b_i$ be 
the sum of the entries in the last column of $M^{(i)}$.  
Note that for $1 \leq i \leq t$, we have $m_i \leq  (2k)^{r_n}$ by the induction hypothesis.
Since $H$ is a $k$-matrix, both $a_i$ and $b_i$ cannot exceed $K=(2k)^{r_n}k$. 

Let $M$ be the $K$-matrix of size $t\times 2$ with $i$th row vector $(a_i,b_i)$, $1 \leq i \leq t$. 
Since $H$ is level, so is $M$. Since $m_i \leq  (2k)^{r_n}$, we also have
\[m=\sum_{i=1}^t m_i \leq \sum_{i=1}^t(2k)^{r_n} = t(2k)^{r_n}.\]
Since $H$ is irreducible, then $M$ is also irreducible, and $t\leq 2K-1$ by Theorem~\ref{thm:La}.  
Thus 
\[m \leq t(2k)^{r_n} <2K(2k)^{r_n}= 2k((2k)^{r_n})^2 = (2k)^{2r_n+1}= (2k)^{r_{n+1}}.\]
\end{proof}
\subsection{The lower bound}\label{sec:LB}\

In this section we construct examples of irreducible matrices, each of whose number of rows is 
greater than some exponential function of its number of columns.
\begin{theorem}\label{thm:LB} 
Let $\epsilon>0$ be any real number.
There exist infinitely many irreducible $1$-matrices with 
$n$ columns and more than $e^{(1-\epsilon)\sqrt{n\ln n}}$ rows. 
\end{theorem}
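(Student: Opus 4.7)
The plan is to build large irreducible $1$-matrices by a block-diagonal construction whose blocks have pairwise coprime column sums, so that the LCM of those sums is as large as possible. Let $q_1<q_2<\cdots<q_r$ be the first $r$ primes and set $p_i=q_i+1$, so that $\gcd(p_i,q_i)=1$. For each $i$, let $C_i$ be the $p_i\times p_i$ circulant $1$-matrix whose $j$th row is the cyclic shift by $j$ of $(1,\ldots,1,0)$ with $q_i$ ones; equivalently, row $j$ has a $0$ in column $j-1\pmod{p_i}$ and $1$'s elsewhere, so each column of $C_i$ sums to $q_i$. Setting $Q=\prod_{i=1}^r q_i$ and $c_i=Q/q_i$, the matrix of interest is
\[
M=\mathrm{diag}\bigl(C_1^{(c_1)},\,C_2^{(c_2)},\,\ldots,\,C_r^{(c_r)}\bigr),
\]
where $C_i^{(c_i)}$ is obtained from $C_i$ by repeating each of its rows $c_i$ times. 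Then $M$ is a $1$-matrix with $n=\sum_i p_i$ columns, $m=\sum_i c_ip_i=Q\sum_i p_i/q_i\geq rQ$ rows, and every column sum equal to $Q$.

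The first key step is to show that every nonnegative integer leveler of $C_i$ is a multiple of $\bo$. If $C_i^Ty=\lambda\bo$, then the $k$th entry of $C_i^Ty$ equals $(\sum_j y_j)-y_{k+1}$ (because row $j$ of $C_i$ misses column $k$ exactly when $j\equiv k+1\pmod{p_i}$), forcing $y_{k+1}$ to be independent of $k$; hence $y=c\bo$ and $\lambda=cq_i$. The second step lifts this to $M$. A nonempty level subset $S$ of rows of $M$ with common column sum $t$ restricts on the $i$th block to a vector $y_i\in\Z^{p_i}$ with $0\leq y_i\leq c_i\bo$ (the entries counting how many of the $c_i$ copies of each row of $C_i$ appear in $S$) satisfying $C_i^Ty_i=t\bo$; the first step gives $y_i=c_i'\bo$ with $c_i'q_i=t$, so $c_i'=t/q_i$ must be an integer in $[0,Q/q_i]$ for every $i$. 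Pairwise coprimality of the $q_i$'s then forces $Q\mid t$ and $t\leq Q$, leaving only $t=0$ (i.e., $S=\emptyset$) or $t=Q$ (i.e., $y_i=c_i\bo$ for every $i$, so $S$ is all of $M$). Thus $M$ is irreducible.

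Finally, since $\ln m\geq\ln Q=\theta(q_r)$ where $\theta$ is the Chebyshev function, the prime number theorem gives $\ln m\geq (1+o(1))q_r$. Abel summation yields $n=\sum_{i=1}^r(q_i+1)\sim q_r^2/(2\ln q_r)$, so $\sqrt{n\ln n}\sim q_r$, and combining the two estimates gives $\ln m\geq (1+o(1))\sqrt{n\ln n}$. For any fixed $\epsilon>0$, taking $r$ sufficiently large therefore yields $m>e^{(1-\epsilon)\sqrt{n\ln n}}$; since $n$ strictly increases with $r$, this produces infinitely many such $n$. The main obstacle is the irreducibility verification for $M$: the pairwise coprimality of the $q_i$'s is essential in simultaneously forcing the divisibilities $q_i\mid t$ that pin down $t\in\{0,Q\}$, and any construction that uses only a few distinct column sums would lose the exponential growth coming from the primorial $Q$.
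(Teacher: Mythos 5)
Your construction is essentially identical to the paper's: your circulant block $C_i$ is just the matrix $J_{p_i+1}$ (all ones except a zero diagonal) with its rows cyclically reordered, the row-repetition by $Q/q_i$ and the block-diagonal assembly match exactly, and the asymptotic analysis via the prime number theorem and the sum-of-primes estimate is the same. The one place you go beyond the paper is in writing out the irreducibility verification (the paper merely asserts it from pairwise coprimality), and your argument there is correct.
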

\begin{proof}
We write $f(x) \sim g(x)$ if $\lim_{x \to \infty} f(x)/g(x) = 1$.

Let $p_i$ denote the $i$th prime,  and for $x \geq 2$, let $t=\pi(x)$ be the number of primes not 
exceeding $x$. Let $J_r$ be the $r\times r$ matrix 
with $0$'s on its main diagonal and $1$'s everywhere else. 
Set $r_i= p_i + 1$, $n=n(x)=\sum_{i=1}^t r_i$, and consider the $n \times n$ matrix 
\begin{equation*} 
A =A(x)= 
\left[ 
\begin{matrix}
J_{r_1} & 0 & 0 & \cdots & 0\\
0 & J_{r_2} & 0 & \cdots & 0\\
& & \cdots & &\\
0 & 0 &   \cdots & 0 & J_{r_t} 
\end{matrix}
\right],
 \end{equation*}
where the 0's represent zero matrices of the appropriate sizes. 
By repeating each row of $J_{r_i}$ in $A$ exactly $P/p_i$ times, where 
$P=P(x)= \prod_{i=1}^t p_i$, we get a level matrix $A^*=A^*(x)$. 
Since $p_1, p_2, \ldots,p_t$ are relatively prime in pairs, 
the matrix $A^*$ is irreducible, and the number of columns of $A^*$ is still $n$. 

Let $\theta=\theta(x)=\ln \left(\prod_{i=1}^t p_i\right)$.
The prime number theorem states that $\pi(x) \sim x/ \ln x$, and this is equivalent to $\theta\sim x$
(see~\cite{Ap}, Th. 4.4).  It also follows from~\cite{La} that $\sum_{i=1}^tp_i\sim x^2/\ln(x^2)$, 
and from~\cite{Ap}, Th. 12, that $\sum_{i=1}^t\frac{1}{p_i} \sim \ln \ln x$.

Now the number of rows of $A^*$ is 
\[m=m(x)=\sum_{i=1}^t\frac{P}{p_i}(p_i+1).\]
Thus
\[m= P\sum_{i=1}^t(1 +1/p_i) \mbox{ and } \ln m=\theta(x)+ \ln\left(\sum_{i=1}^t (1 +1/p_i)\right).\]
These relations yield $\ln m\sim x$ and $n \sim x^2/\ln (x^2)$.

Notice that $f(y) = \sqrt{y \ln y}$ is increasing for $y \geq 1$.
Let $y$ be such that $x=\sqrt{y\ln y}$, and note that $x \to \infty$ if and only if $y \to \infty$.  
Then $\ln m\sim x$ and $n \sim x^2/\ln (x^2)$ imply that 
\[\ln m\sim \sqrt{y\ln y} \mbox{ and } 
n \sim \frac{x^2}{\ln x^2}=\frac{y\ln y}{\ln (y\ln y)}= \frac{y\ln y}{\ln y+\ln\ln y}\sim y.\]
Hence $\ln m\sim \sqrt{n\ln n}$. Thus, for any $\epsilon>0$, there exists an integer $n=n(x,\epsilon)$ 
such that $\ln m / \sqrt{n\ln n} > 1 - \epsilon$,  or 
\[ m> e^{(1-\epsilon)\sqrt{n\ln n}}.\]
\end{proof}
\subsection{Proof of Theorem~\ref{thm:main}}\label{sec:proof}\

The first part Theorem~\ref{thm:main} follows from directly from Theorem~\ref{thm:LB}.
The upper bound for $\ell(n,k)$ follows from Theorem~\ref{thm:UB1} when $n=3$, and from 
Theorem~\ref{thm:UB2} when $n>3$.
\section{Application to  multipartitions of finite vector spaces}\label{sec:app-vsp}
Let $V=V(n,q)$, where $V(n,q)$ denotes the $n$-dimensional vector space over the finite field 
with $q$ elements. We will consider multisets of nonzero subspaces of $V$ such 
that  each nonzero element of $V$ is in the same number of subspaces, counting multiplicities. 
More explicitly, a {\em multipartition} $P$ of $V$ is a pair $(F, \alpha)$, where $F$ is a finite set, 
$\alpha$ is a function from $F$ to the set of nonzero subspaces of $V$, and there exists a 
positive integer $\lambda$ such that whenever  $v$ is a nonzero elements of $V$ we have 
\[ |\{f \in F :\;  v \in \alpha(f)\}|=\lambda.\]
In this case,  we call $P$ a $\lambda$-{\em partition}.

A number of papers have been written about $1$-partitions, usually just called ``partitions'',
(e.g., see~\cite{Be,Bu,ESSSV1,He1,Li} and \cite{He2} for a survey), and at least one about 
multipartitions (\cite{ESSSV2}). 
A general question in this area is to classify the multipartitions of $V$.

If $V$ has a $\lambda$-partition $P$ and a $\mu$-partition $Q$, then  a 
$(\lambda + \mu)$-partition of $V$ may be formed by combining $P$ and $Q$ in the obvious way. 
We denote this by $P + Q$. 
Conversely, it may be possible to break a multipartition into smaller multipartitions.  
Thus, it is of interest to investigate multipartitions that cannot be broken up any further.  
We call a multipartition $P$  of $V$ {\em irreducible} if there do not exist multipartitions 
$Q_1$ and $Q_2$ of $V$ such that $P = Q_1 + Q_2$. 
Clearly any multipartition of $V$ can be written as a sum of irreducible multipartitions of $V$.

Let $S$ be a set.  Call $(F,\alpha)$ a {\em level family} of $S$ if $\alpha$ is a function from $F$ into 
$2^S\backslash \{\emptyset \}$ for which there exists a positive integer $\lambda$ 
such that if $x \in S$, then $|\{f \in F :\;  \alpha(f)=x\}| = \lambda$. We call $\lambda$ 
 the {\em height} of the family. 
Call the level family $(F, \alpha)$ with height $\lambda$ {\em reducible} if there exists a subset $F'$ of $F$ 
and an integer $\lambda'$, 
$0 < \lambda' < \lambda$, such that $(F', \alpha|_{F'})$ is a level family of $S$ with height $\lambda'$; 
otherwise call $(F, \alpha)$ {\em irreducible}. 

\begin{corollary}
If $S$ has $n \geq 2$ elements and $(F,\alpha)$ is an irreducible family of $S$, then
\[|F|\leq (n+1)^{(n+1)/2}.\]
Thus, a finite set $S$ has only finitely-many irreducible families.
\end{corollary}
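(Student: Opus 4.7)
The plan is to reduce the corollary to the $k=1$ case of Theorem~\ref{thm:main} via an incidence-matrix encoding. Given an irreducible level family $(F,\alpha)$ of an $n$-element set $S$ with height $\lambda$, I would form the zero-one matrix $M\in\{0,1\}^{|F|\times n}$ whose rows are indexed by $F$ and columns by $S$, with $M_{f,s}=1$ precisely when $s\in\alpha(f)$. Every column of $M$ sums to $\lambda$ by the definition of height, so $M$ is a level $1$-matrix.

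The key step is to verify that irreducibility of $(F,\alpha)$ passes to $M$. Suppose $M$ were reducible: some $j$ rows with $0<j<|F|$ could be removed leaving a level submatrix $M'$, indexed by some proper subset $F'\subsetneq F$, with common column sum $\lambda'$. Because each $\alpha(f)$ is nonempty, every row of $M$ is a nonzero zero-one vector; thus $F'\ne\emptyset$ forces $\lambda'>0$, while the fact that each deleted row contributes at least one $1$ to some column forces $\lambda'<\lambda$. Then $(F',\alpha|_{F'})$ is a level family of $S$ with height $\lambda'$ strictly between $0$ and $\lambda$, contradicting the irreducibility of $(F,\alpha)$. Hence $M$ is an irreducible level $1$-matrix, and by Theorem~\ref{thm:main} (via Theorem~\ref{thm:UB1}, which is stated for every $n>1$) specialized to $k=1$,
\[
|F|=\ell(M)\le 1^{n-1}\cdot 2^{-n}(n+1)^{(n+1)/2}\bigl(2^n-1+1\bigr)=(n+1)^{(n+1)/2},
\]
which is the claimed inequality.

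For the finiteness statement, I would observe that up to relabeling of $F$, an irreducible level family is nothing more than the multiset $\{\alpha(f):f\in F\}$ of nonempty subsets of $S$. Since there are only $2^n-1$ possible values for $\alpha(f)$ and the total multiset size is now bounded by $(n+1)^{(n+1)/2}$, there are only finitely many such multisets. I do not anticipate any serious obstacle; the only delicate point is confirming that ``reducibility of the family'' and ``reducibility of the associated $1$-matrix'' really coincide, which rests entirely on the hypothesis $\alpha(f)\ne\emptyset$ needed to rule out the degenerate possibility $\lambda'=0$.
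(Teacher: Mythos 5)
Your proposal is correct and follows essentially the same route as the paper: encode the family as a $0$--$1$ incidence matrix, check that it is level and irreducible, and apply the $k=1$ case of the upper bound to get $(n+1)^{(n+1)/2}$. You are in fact slightly more careful than the paper on two points --- explicitly verifying that reducibility of the matrix would yield a level subfamily with height strictly between $0$ and $\lambda$ (using $\alpha(f)\neq\emptyset$), and citing Theorem~\ref{thm:UB1} directly so that the cases $n=2,3$ are covered --- but these are refinements of the same argument, not a different one.
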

\begin{proof}
Let $s_1,\ldots,s_n$ be the distinct elements of $S$, and let $f_1,\ldots,f_m$ be the distinct 
elements of $F$.
Define the $1$-matrix $B$ with entries $b_{1,1},\ldots,b_{m,n}$ by letting $b_{ij} = 1 $ if 
$s_j \in \alpha(f_i)$, and $b_{ij}=0$ otherwise.
By the definition of an irreducible family all the column sums of $B$ are equal, and $B$ is an 
irreducible matrix. Then by setting $k=1$ in Theorem~\ref{thm:main}, we obtain $|F| = m\leq (n+1)^{(n+1)/2}$.
\end{proof}
\begin{corollary}
If $(F,\alpha)$ be an irreducible $\lambda$-partition of $V(n,q)$ for some integer $\lambda$, then 
\[ |F|\leq q^{(n-1)q^{n-1}/2}.\]  
Thus, $V(n,q)$ has finitely--many irreducible $\lambda$-partitions.
\end{corollary}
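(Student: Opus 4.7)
The plan is to mimic the strategy of the preceding corollary, converting the multipartition into a level $1$-matrix and invoking Theorem~\ref{thm:main}. First, list the distinct nonzero subspaces in the image of $\alpha$ as $W_1,\ldots,W_m$, with multiplicities $c_i=|\alpha^{-1}(W_i)|$, so $|F|=\sum_{i=1}^m c_i$. Enumerate the nonzero vectors of $V(n,q)$ as $v_1,\ldots,v_N$ with $N=q^n-1$, and define the $m\times N$ incidence matrix $B=(b_{ij})$ by $b_{ij}=1$ if $v_j\in W_i$ and $b_{ij}=0$ otherwise.

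The condition that every nonzero $v\in V$ lies in exactly $\lambda$ subspaces (counted with multiplicity) translates to $\sum_{i=1}^m c_i b_{ij}=\lambda$ for every column $j$; equivalently, the $|F|\times N$ matrix $B'$ obtained by stacking $c_i$ copies of the $i$th row of $B$ is a level $1$-matrix. A nontrivial multipartition-decomposition $P=Q_1+Q_2$ corresponds exactly to a nontrivial splitting of the rows of $B'$ into two level submatrices, so $B'$ is irreducible as a level matrix if and only if $(F,\alpha)$ is irreducible as a multipartition. Applying Theorem~\ref{thm:main} with $k=1$ then yields $|F|\leq\ell(q^n-1,1)$, which is finite; this alone already establishes the ``finitely many irreducible $\lambda$-partitions'' assertion.

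The main obstacle is extracting the sharper quantitative bound $q^{(n-1)q^{n-1}/2}=(q^{n-1})^{q^{n-1}/2}$. This target corresponds to applying Theorem~\ref{thm:main} with $q^{n-1}-1$ columns rather than $q^n-1$, i.e.\ restricting incidences to the nonzero vectors of a single hyperplane $H<V$. The restricted matrix $\widehat{B}'$ is still a level $1$-matrix by pointwise restriction, but distinct $W_i$'s can produce identical rows on $H$, so the irreducibility of $B'$ need not pass to $\widehat{B}'$ automatically. I expect to resolve this via the transitive action of $GL(V)$ on hyperplanes: a reduction of $\widehat{B}'$ should, after an averaging argument over the hyperplane orbit, lift to a reduction of $B'$, contradicting the irreducibility of $(F,\alpha)$ and thereby yielding the sharp bound.
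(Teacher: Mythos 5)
Your first two paragraphs are fine and match the paper's strategy in spirit: the incidence matrix is level, irreducibility of the matrix is equivalent to irreducibility of the multipartition, and Theorem~\ref{thm:main} with $k=1$ gives finiteness. But the quantitative bound is where your proof stops being a proof. Using the nonzero vectors of $V$ as columns gives $q^n-1$ columns and hence only $|F|\leq (q^n)^{q^n/2}$, and your proposed repair --- restricting columns to the nonzero vectors of a hyperplane and then ``averaging over the hyperplane orbit'' --- is not carried out and, as sketched, cannot work. The restricted matrix $\widehat{B}'$ is indeed still level, but it is essentially never irreducible: any $W_i$ with $W_i\cap H=\{0\}$ (e.g.\ a line not contained in $H$) contributes a zero row, and a matrix with a zero row is reducible by deleting everything else; more generally distinct subspaces collapse to equal or nested rows on $H$. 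So ``$\widehat{B}'$ is reducible'' carries no information about $(F,\alpha)$, and there is no reduction of $\widehat{B}'$ to lift. You have correctly identified the obstacle but not supplied the missing idea.

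The paper's device is different and much simpler: index the columns not by nonzero vectors and not by a hyperplane, but by the $t=(q^n-1)/(q-1)$ one-dimensional subspaces $W_1,\dots,W_t$ of $V(n,q)$, setting $b_{ij}=1$ iff $W_j\subseteq\alpha(f_i)$. Since a nonzero vector lies in a subspace exactly when the line it spans does, the $\lambda$-partition condition says every column of $B$ sums to $\lambda$, and irreducibility transfers with no loss --- the $q-1$ columns corresponding to the scalar multiples of a vector were identical anyway, so deleting the duplicates changes nothing. Applying Theorem~\ref{thm:main} with $k=1$ and $n$ replaced by $t$ then gives $|F|\leq (t+1)^{(t+1)/2}$. (You should be aware that the paper then asserts $t+1=q^{n-1}$, which is not an identity --- e.g.\ $n=2$, $q=3$ gives $t+1=5\neq 3$ --- so even the paper's stated exponent does not follow literally from its own argument; but the intended mechanism is the passage to projective points, which is the idea your proposal is missing.)
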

\begin{proof}
Let $W_1,\ldots,W_t$ be the distinct 1-dimensional subspaces of $V(n,q)$, where $t=(q^n-1)/(q-1)$, 
and let $f_1,\ldots, f_m$ be the distinct elements of $F$. 
Define the $1$-matrix $B$ with entries $b_{1,1},\ldots,b_{m,t}$ by letting  $b_{ij} = 1$ if 
$W_j\subseteq\alpha(f_i)$ and $b_{ij}=0$ otherwise.
Then all the column sums of $B$ are $\lambda$, and $B$ is an irreducible matrix. 
Since $t+1=q^{n-1}$, setting $k=1$ in Theorem~\ref{thm:main} yields
\[ |F|=m\leq (t+1)^{(t+1)/2}=q^{(n-1)q^{n-1}/2}.\] 
\end{proof}
\section{Conclusion}\label{sec:conc}
Our main question (see page ~1) is still open in general.
For example, if $A\in M_{m,n}(\Z)$ is a $k$-matrix such that $A^T\vec{x}=\bo$
for some $\vec{x}>\vec{0}$, then we know by Lemma~\ref{lem:LG} that the number of rows, $\ell(L_A)$, 
of the irreducible matrix $L_A=L(A,r_x,\vec{x})$ satisfies $\ell(L_A)\leq (k/2)^{n-1}(n+1)^{(n+1)/2}$. 
However, the small cases that we have checked suggest that $\ell(L_A)$ is much smaller.
It would be interesting to find the exact value of $\ell(L_A)$ or improve its upper bound.
Such an improvement would also give a better upper bound for the general 
value of $\ell(n,k)$ in Theorem~\ref{thm:main}.

In Proposition~\ref{prop:hil}, we characterized the set $\Zc(A)$ of levelers of a given matrix by a cone
whose Hilbert basis is the set of irreducible levelers $\I(A)\subseteq \Zc(A)$. It would be interesting
to investigate if this characterization can shed more light on the study of Hilbert bases 
(e.g., see~\cite{CFS,HW}) in certain cases. 

\bs\n{\bf Acknowledgement:}\
The authors thank S. Tipnis for suggesting the connection to polyhedral cones.

\end{document}